\journal{Journal}
\begin{document}

\title{Suboptimal Model Predictive Control with a Computation Governor: Stability, Recursive Feasibility, and Applications to Alternating Direction Method of Multipliers}

\author[1,2]{Steven van Leeuwen}

\author[1]{Ilya Kolmanovsky}

\titlemark{Suboptimal MPC with Applications to ADMM}

\address[1]{\orgdiv{Department of Aerospace Engineering}, \orgname{University of Michigan}, \orgaddress{\state{Michigan}, \country{USA}}}

\address[2]{\orgname{Johns Hopkins University Applied Physics Lab}, \orgaddress{\state{Maryland}, \country{USA}}}

\corres{Steven van Leeuwen \email{svanlee@umich.edu}}

\abstract[Abstract]{The paper considers a computational governor strategy to facilitate the implementation of Model Predictive Control (MPC) based on inexact optimization when the time available to compute the solution may be insufficient.  In the setting of linear-quadratic MPC and a class of optimizers that includes Alternating Direction Method of Multipliers (ADMM), we derive conditions on the reference command adjustment by the computational governor and on a constraint tightening strategy which ensure convergence of the modified reference command, recursive feasibility, and closed-loop stability. An online procedure to select the modified reference command and construct an implicit terminal set is also proposed. A simulation example is reported which illustrates the developed procedures.}

\keywords{Model Predictive Control, ADMM, Constraint Tightening, Suboptimality, Reference Governor}

\maketitle

\section{Introduction}
Typical implementations of Model Predictive Control (MPC) utilize the numerical solution of the underlying Optimal Control Problem (OCP). When time to compute the solution is insufficient, inexact solutions are often used; such an approach is referred to as suboptimal MPC.

In the previous conference paper \cite{SVL_ADMM_MPC}, we considered how the Alternating Direction Method of Multipliers (ADMM) can be used to implement Linear Quadratic MPC (LQ-MPC) for reference command tracking while terminating ADMM at a desired level of suboptimality and in such a way that the computation times can be reduced. The proposed strategy combined reference command modification via a Computation Governor (CG) with constraint tightening to ensure that primal infeasibility with respect to the OCP that ADMM solves does not lead to the actual constraint violations (see related works \cite{closest_to_me} \cite{main_suboptimal}). The analysis used the characterization of convergence rates \cite{optimal_step_size}. Asymptotic tracking of the constraint admissible reference commands was shown through simulation.

The previous paper \cite{SVL_ADMM_MPC} did not provide formal guarantees of asymptotic stability, nor a characterization of  regions of attraction (ROA); this is addressed in this paper. Notably, there have been related studies aimed at ADMM-type optimizers, inexact solutions, and stability guarantees. In the regulator case (i.e., when reference command tracking is not considered) the works \cite{main_suboptimal} \cite{other_suboptimal} characterized recursive feasibility and asymptotic stability properties in the setting of inexact LQ-MPC and operator splitting algorithms. In Yang et al. \cite{main_suboptimal} the Fast Alternating Minimization Algorithm was used in conjunction with dynamic constraint tightening. 

In this paper, we consider LQ-MPC which is solved numerically using the ADMM optimization algorithm or another optimization algorithm which satisfies similar properties (to be further delineated in the paper). We leverage the results in Yang et al. \cite{main_suboptimal} and exploit them in the setting of the CG.

 The CG is used to reduce number of iterations of the optimizer \cite{ipm_cg_journal} and can also act as a feasibility governor \cite{Skibik_FG} \cite{Skibik_ROA}, that expands ROA estimates of desired references \cite{Skibik_FG}. We show Input-to-State Stability (ISS) \cite{ISS_Lyapunov_original} properties as well as the convergence of the modified reference to the desired reference. We employ ISS-Lyapunov functions per each modified reference; this approach differs from the ones in the previous proofs of reference convergence \cite{Skibik_FG} \cite{Skibik_ROA} which constructed the ISS-Lyapunov functions with a dependence on the increments of the reference commands. 

In addition, we propose an online procedure for less conservatively selecting a suboptimality criterion for the optimizer leading to a further reduction of ADMM iterations. The online procedure also automates the computation of a ROA of the modified reference, which is inspired by the implicit terminal set characterization \cite{stable_no_terminal_constraint}. This paper's contributions include (i) extending the feasibility and stability analysis to the setting of inexact optimization and constraint tightening as well as general classes of optimizers, (ii) introducing a CG with reference modification for the setting, (iii) establishing convergence guarantees for the CG with piecewise ISS-Lyapunov functions, (iv.) developing an efficient online construction of the implicit terminal set and corresponding ROA.

The paper is organized as follows. Section 2 summarizes key notations. Section 3 details the tracking MPC formulation with constraint tightening. Section 4 introduces the Computation Governor. Section 5 describes the ADMM algorithm. Section 6 presents the analysis of the recursive feasiblity, asymptotic stability, and convergence properties and contains the main contributions. Section 7 treats a numerical example.

\section{Notation} For two matrices $D_1,D_2$ of appropriate size, we let $(D_1,D_2) = \begin{bmatrix}
    D_1 \\ D_2
\end{bmatrix}$. A vector of ones is denoted by $\vec{1}$. The diagonal matrix with the value $a$ on the diagonal is denoted by $I_a$ with its dimension being clear from the context. The indicator function of a set $\mathcal{C}$ is $\mathcal{I}_{\mathcal{C}}$. For a real (square) matrix $D (E)$, $a^T E a = ||a||_E^2$, and, $\overline{\lambda}(D), \lambda_{max}(E), \lambda_{min}(E)$ denote the largest singular value of $D$, largest eigenvalue of $E$, and smallest eigenvalue of $E$, respectively. The symbols $\succ(\succeq)$ denote positive (semi)definiteness. The quantity $c_{k|t}$ denotes the predicted value of a variable $c$ at the time instant $k+t$ when the prediction is made at the time instant $t$. When $c$ represents a variable updated by an iterative optimization algorithm, we use $c_{j|t}$ to denote the value of $c$ at the iteration $j$ of the algorithm.  Finally, $c_{v|t}$ denotes a steady-state quantity resulting from the reference command $v$ applied to the system at time instant $t$. The matrix $\Xi = [I \ 0 \ ... \ 0]$ is a selection matrix for $c_{0|t}$. The kronecker product is $\otimes$. The projection operation is $\Pi$.

\section{MPC Tracking Problem}
Consider a dynamic system represented by a linear model 
\begin{align}
    x_{t+1} = Ax_t +Bu_t
\end{align}
with $ t \in  \mathbb{Z}_{\geq 0}, \ x_t \in \mathbb{R}^{n_x}, \ u_t \in \mathbb{R}^{n_u}$, being the discrete time instant, states, and controls inputs, respectively. Let $Z = [A-I \ B]$, and let $G = (G_x, G_u)$ be a basis for the null space of $Z$. Then the set of steady-state equilibrium pairs $\{(x_v, u_v)\}$ can be parameterized as $ \{ Gv \ | \ v \in \mathbb{R}^{n_v} \}$, where $v$ is treated as the reference (setpoint) command. The time sequence of desired reference commands is $\{v_t\}, v_t \in \mathcal{V}$, where $\mathcal{V}$ is a compact and convex subset of $\mathbb{R}^{n_v}$. The desired equilibrium pair corresponding to $v_t$ is $(x_{v|t},u_{v|t}) = (G_x v_t, G_u v_t)$.

The LQ-MPC involves solving the following OCP at each time instant $t$, with the resulting $u_{0|t}$ applied to (1) at time $t$:
\begin{subequations}
\begin{gather}
\min_{\xi_t, \eta_t}||x_{N|t}-x_{v|t}||_P^2 + \sum_{k=0}^{N-1} ||x_{k|t}-x_{v|t}||_Q^2 +||u_{k|t}-u_{v|t}||_R^2
\end{gather}
$s.t.$
\begin{gather}
x_{k+1|t} = Ax_{k|t} + Bu_{k|t}, \ k \in \mathbb{Z}_{[0,...,N-1]} \\
u_{k|t} \in \mathcal{U}, \ k \in \mathbb{Z}_{[0,...,N-1]} \\
x_{k|t} \in \mathcal{X}, \ k \in \mathbb{Z}_{[1,...,N]}.
\end{gather}
\end{subequations}
In (2), $x_{0|t} = x_t$ is given, and (2) is parameterized by $\theta_t = (x_t,v_t)$. We assume that the state and control constraints are affine so that $\mathcal{X} \times \mathcal{U} \subset \mathbb{R}^{n_c}$, with $\mathcal{X} = \{ x \ | \ \acute{b}_x \leq C x \leq \grave{b}_x \}, \ \mathcal{U} = \{ u \ | \ \acute{b}_u \leq D u \leq \grave{b}_u \}$.

We define $\eta_t = (u_{0|t}, u_{1|t},...,u_{N-1|t}) \in \mathbb{R}^{Nn_u}, \ \xi_t = (x_{0|t}, x_{1|t},...,x_{N|t}) \in \mathbb{R}^{Nn_x}, \ \chi_t = (\eta_t,\xi_t), \ \tilde{x}_t = x_t - x_{v|t}, \ \tilde{u}_t = u_t - u_{v|t}, \ l(\tilde{x},\tilde{u}) = ||\tilde{x}||^2_Q + ||\tilde{u}||^2_R, \ F(\tilde{x}) = ||\tilde{x}||^2_P$. Furthermore, we let
 $\tilde{\eta}_t = \eta_t - u_{v|t} \otimes \vec{1}, \ \tilde{\xi}_t = \xi_t - x_{v|t} \otimes \vec{1}, \ \tilde{\chi}_t = (\tilde{\eta}_t,\tilde{\xi}_t)$.

The following assumptions are made regarding the MPC formulation:
\begin{assumption}
The set $\mathcal{X} \times \mathcal{U}$ is compact, and contains the origin in its interior. Furthermore, $(x_{v|t},u_{v|t})$ is in the interior of $\mathcal{X} \times \mathcal{U}$ for all $v_t \in \mathcal{V}$.
\end{assumption}
\begin{assumption}
The pair $(A,B)$ is stabilizable.
\end{assumption}
\begin{assumption}
The weight matrices of (2) satisfy $P,Q,R \succ 0$ and the Riccati equation to ensure local stability, $P = Q + A^T P A -(A^TPB)K$, with $K = (R + B^TPB)^{-1} (B^TPA)$.
\end{assumption}

The OCP (2) can be condensed \cite{predictive_control_linear_and_hybrid}, by eliminating the state sequence $\xi_t$, to a QP problem which has the following form,
\begin{subequations}
\begin{gather} 
\min_{\eta_t} \frac{1}{2} \eta_t^T H \eta_t + \eta_t^T W \theta_t \\
s.t. \ \underline{\underline{b}} \leq M \eta_t - L\theta_t \leq \overline{\overline{b}}.
\end{gather}
\end{subequations}

In this QP, $\underline{\underline{b}},\overline{\overline{b}} \in \mathbb{R}^{Nn_c},$ represent the bounds corresponding to the constraint set $\mathcal{X} \times \mathcal{U}$, with the bounds written in the form $\underline{\underline{b}} = (\underline{\underline{b}}_u,\underline{\underline{b}}_x) = (\acute{b}_u \otimes \vec{1}, \acute{b}_x \otimes \vec{1}), \ \overline{\overline{b}} = (\overline{\overline{b}}_u,\overline{\overline{b}}_x) = (\grave{b}_u \otimes \vec{1}, \grave{b}_x \otimes \vec{1})$. The expression $M\eta_t - L \theta_t$ is used for the constraints as it allows for use of the condensed formulation with simple constraint projections. 

\subsection{Tightened Constraints}
The OCP (3) can be rewritten in the following form, which is equivalent to (3) when the scalar parameter used for constraint tightening in (4c), $\Sigma_t$, is zero:
\begin{subequations}
\begin{gather}    
\mathcal{P}^{\Sigma_t}(\theta_t) : \min_{y_t} \frac{1}{2} y_t^T \Bar{H} y_t + y_t^T \Bar{W} \theta_t \\
s.t. \ \Bar{M} y_t = L \theta_t \\
\underline{\underline{b}} + \Sigma_t\vec{1} \leq S_s y_t \leq \overline{\overline{b}} - \Sigma_t\vec{1},
\end{gather}
\end{subequations}
where $y = \begin{bmatrix} \eta \\ s \end{bmatrix}$, $\Bar{M} = [M \ -I]$, $\Bar{W} = \begin{bmatrix} W \\ 0\end{bmatrix}$, $\Bar{H} = \begin{bmatrix}
    H & 0 \\
    0 & 0
\end{bmatrix}$, and $S_\eta = [I \ 0], \ S_s = [0 \ I]$, $S_\eta y = \eta, \ S_{s} y = s$, where $s \in \mathbb{R}^{Nn_c}$ is the slack variable for (3b), and $\underline{\underline{b}} < \overline{\overline{b}}$ elementwise. The constraints (4b)-(4c) are tightened versions of (3b) when $\Sigma_t > 0$. We assume that $\overline{\Sigma}$ is such that as long as $ 0 < \Sigma_t\leq \overline{\Sigma}$, the set described in (4c) is nonempty.

Note that $\Bar{H} \succeq 0$, thus we assume that \cite{optimal_step_size}
\begin{assumption}
    The Hessian $\Bar{H}$ is positive definite on the null space of $\Bar{M}$.
\end{assumption}

We define the following sets:
$ \mathcal{U}^{\Sigma_t} = \{u \ | \ \acute{b}_u + \Sigma_t \vec{1} \leq Du \leq \grave{b}_u - \Sigma_t \vec{1} \}, \ \mathcal{X}^{\Sigma_t} = \{x \ | \ \acute{b}_x + \Sigma_t \vec{1} \leq Cx \leq \grave{b}_x - \Sigma_t \vec{1} \}, \
\mathcal{Y} = \{y \ | \ \Bar{M} y = L \theta \}, \ \mathcal{Z}^{\Sigma_t} = \{ y \ | \ \underline{\underline{b}} + \Sigma_t\vec{1} \leq S_s y \leq \overline{\overline{b}} - \Sigma_t\vec{1} \}
$.
Then we also impose the following assumption, which states the desired references are a minimum distance away from the constraint boundary:
\begin{assumption}
    There exists a constant $\underline{\Sigma} > 0$ such that $(x_{v|t},u_{v|t}) \in \text{int}(\mathcal{X}^{\underline{\Sigma}} \times \mathcal{U}^{\underline{\Sigma}}) $ for all $ v_t \in \mathcal{V}$.
\end{assumption}

Let the set of feasible initial states $x_t$ of $\mathcal{P}^{\Sigma_t}(\cdot)$ be $\Psi(\Sigma_t)$, and note it is independent of $v_t$. Let the minimizer of (4) be $(\eta^{*\Sigma_t}(\theta_t),s^{*\Sigma_t}(\theta_t))$, with $\tilde{\eta}^{*\Sigma_t}(\theta_t) = \eta^{*\Sigma_t}(\theta_t) - u_{v|t} \otimes \vec{1}$, the controls at step $k=0$ be $ u^{*\Sigma_t}(\theta_t)$, and resulting state sequence of (4) be $\xi^{*\Sigma_t}(\theta_t)$, with $\tilde{\xi}^{*\Sigma_t}(\theta_t) = \xi^{*\Sigma_t}(\theta_t) - x_{v|t} \otimes \vec{1}$. We also let $\tilde{\chi}^{*\Sigma_t}(\theta_t) = (\tilde{\eta}^{*\Sigma_t}(\theta_t),\tilde{\xi}^{*\Sigma_t}(\theta_t))$. 

\subsection{Suboptimality}
The optimizer is modeled as the following mapping
\begin{equation}
w_{j+1|t} \leftarrow \mathcal{T}(\mathcal{P}^{\Sigma_t}(\theta_t),\{w_{j|t}\}),    
\end{equation}
where $w_{j|t}$ is the optimizer state at iteration $j$ at time $t$, and $\{w_{j|t}\}$ is the sequence of optimizer states up to iteration $j$ at time $t$. The optimizer state depends on the optimization algorithm and can include primal and dual variables, etc. We denote the optimal solution in terms of optimizer state that corresponds to a unique global minimizer of (4) (Lemma 1 \cite{SVL_ADMM_MPC}) as $w^*_t$. 

In this paper we consider $\mathcal{P}^{\Sigma_t}(\theta_t)$ solved inexactly, by means of terminating the optimizer at $\hat{j}$ iterations. Note that the evolution of the optimizer states depends on $w_{0|t}$, i.e., their initial values at the initialization, which is determined by the warm-start from the previous time instant 
\begin{equation}
    w_{0|t} \leftarrow w_{\hat{j}|t-1},
\end{equation}
with the initial values at the first time instant set to zero. We do not explicitly indicate this dependence in our notations. We also assume that the optimization algorithm is globally convergent, which is the case, for instance, for ADMM. Modifications of our approach can be developed in the case of locally convergent algorithms. We then have the following mappings:

\begin{subequations}
\begin{gather}
\chi ^{*\Sigma_t}(\theta_t) \leftarrow w^*_t \\    
\hat{\chi}^{\Sigma_t}(\theta_t) \leftarrow w_{\hat{j}|t} \\
\hat{y}^{\Sigma_t}(\theta_t) \leftarrow w_{\hat{j}|t}.
\end{gather}
\end{subequations}

We also denote the primal error residual of the optimizer $\mathcal{T}$ as $\mathsf{r}_{j|t}$, and require 
\begin{equation}
    \mathsf{r}_{\hat{j}|t} \leq \Sigma_t \implies \underline{\underline{b}} \leq M S_\eta \hat{y}^{\Sigma_t}(\theta_t) - L \theta_t \leq \overline{\overline{b}}
\end{equation}
i.e, $\mathcal{P}^0(\theta_t)$ is feasible.

We define the optimal value function of $\mathcal{P}^{\Sigma_t}(\theta_t)$ as $V^{\Sigma_t}(\theta_t)$, and we let
\begin{subequations}
\begin{gather}
\psi^{\Sigma_t}(\theta_t) = \sqrt{V^{\Sigma_t}(\theta_t)}, \\
H^\chi = I_{(Q \otimes \vec{1},P,R \otimes \vec{1})}.
\end{gather}
\end{subequations}
We note that $V^{\Sigma_t}(\theta_t)$ is equal to
$||\tilde{\chi}^{*\Sigma_t}(\theta_t)||^2_{H^{\chi}} $, and $\hat{\chi}^{\Sigma_t}(\theta_t) = (\hat{\eta}^{\Sigma_t}(\theta_t), \hat{\xi}^{\Sigma_t}(\theta_t))$ aggregates the control and state sequences computed from $w_{\hat{j}|t}$. The associated sequences $\hat{\tilde{\chi}}^{\Sigma_t}(\theta_t) = (\hat{\tilde{\eta}}^{\Sigma_t}(\theta_t), \hat{\tilde{\xi}}^{\Sigma_t}(\theta_t))$ are used to define the suboptimal value function of $\mathcal{P}^{\Sigma_t} (\theta_t)$ as $\hat{V}^{\Sigma_t}(\theta_t)$, with $\hat{\psi}^{\Sigma_t}(\theta_t) = \sqrt{\hat{V}^{\Sigma_t}(\theta_t)}$. Let the controls at step $k=0$ be $ \hat{u}^{\Sigma_t}(\theta_t)$, and the $k$th predicted state be $\hat{x}_{k}^{\Sigma_t}(\theta_t)$. 

\section{Computation Governor}
Consider a sequence of references $\{v_t\}$. The CG modifies the reference command at time $t$ from $v_t$ to $\hat{v}_t$.
The basic procedure is summarized in Algorithm 1, where $\kappa_t$ is a CG parameter which determines the modified reference, $\mathcal{E}^{\Sigma} : (\theta_t, \theta_{t-1}, \Sigma_{t-1}) \mapsto \mathbb{R}$ is a continuous function which determines $\Sigma_t$; and $\mathcal{D}^{\mathsf{r}} : \{w_{j|t}\} \mapsto \mathbb{R}$ is a continuous function which determines the algorithm termination, and satisfies the property that

\begin{equation}
     ||\hat{u}^{\Sigma_t}(\theta_t) - u^{*\Sigma_t}(\theta_t)||^2 \leq \mathcal{D}^{\mathsf{r}}(\{w_{\hat{j}|t}\}).
\end{equation}

\begin{algorithm}
\caption{for time $t \geq 1$, $0 < \kappa_t \leq 1$}\label{alg:CG}
\begin{algorithmic}[1]
\Require{$x_t,\kappa_t,v_t,\theta_{t-1},w_{j|t-1}, \mathcal{D}^\mathsf{r}$}
\State $\hat{v}_t \leftarrow \kappa_t (v_t - \hat{v}_{t-1}) + \hat{v}_{t-1}$
\State $w_{0|t} \leftarrow w_{\hat{j}|t-1}$ 
\State $\Sigma_t \leftarrow \mathcal{E}^{\Sigma}(\theta_t, \theta_{t-1},\Sigma_{t-1})$
\State $j \leftarrow 0$
\While{$ \Sigma_t^2 < \mathcal{D}^{\mathsf{r}}(\{w_{j|t}\}), \mathsf{r}_{j|t}^2$}
\State $w_{j+1|t} \leftarrow \mathcal{T}(\mathcal{P}^{\Sigma_t}(x_t,\hat{v}_t),\{w_{j|t}\})$
\EndWhile
\State $\hat{j}\leftarrow j$
\Ensure{$\{w_{j|t}\}$ }
\end{algorithmic}
\end{algorithm}

Algorithm 1 ensures the satisfaction of the desired constraints if Algorithm 1 terminates with a finite number of iterations, $\hat{j}$, or the minimizer $w_t^*$ is reached if $\Sigma_t = 0$ (Proposition 1\cite{SVL_ADMM_MPC}). In Section 6 we guarantee this property by ensuring recursive feasibility through design of $\mathcal{E}^{\Sigma}$ and $\kappa_t$. 

We denote by $x_t^{A1}$ the state at time $t$ when evolving according to Algorithm 1.

\section{ADMM Algorithm}
We first specify the optimizer model $\mathcal{T}$ and the function $\mathcal{D}^{\mathsf{r}}$ for ADMM. First, the ADMM algorithm update step is defined as
\begin{gather}
    y_{j+1|t} = E_{11}(\rho z_{j|t} - \mu_{j|t}) + (-E_{11}\Bar{W} + E_{12} L) \theta \\
    z_{j+1|t} = \Pi_{\mathcal{Z}^{\Sigma_t}}(y_{j+1|t} + (1/\rho)\mu_{j|t}) \\
    \mu_{j+1|t} = \mu_{j|t} + \rho (y_{j+1|t} - z_{j+1|t}),
\end{gather}
where $\rho$ is the step size of the ADMM update, $y,z$ are the primal separable variables, and $\mu$ is the dual variable. See (Section 3\cite{SVL_ADMM_MPC} ) for the definitions of $E_{11},E_{12}$.

    For the ADMM algorithm (11)-(13), at $\hat{j}$ we have values of $w_{\hat{j}|t}$ such that $y_{\hat{j}|t} \in \mathcal{Y}^{\Sigma_t}, \ z_{\hat{j}|t} \in \mathcal{Z}^{\Sigma_t}$, with 
    \begin{gather}
    w_{\hat{j}|t} = (z_{\hat{j}|t},\mu_{\hat{j}|t}), \\
        \mathsf{r}_{\hat{j}|t} = ||y_{\hat{j}|t} - z_{\hat{j}|t}||,
    \end{gather}
where (15) satisfies (8) (Proposition 1 \cite{SVL_ADMM_MPC}). In Algorithm 1, Line 5, in the case of ADMM,
\begin{gather}
    \mathcal{D}^{\mathsf{r}}(\{w_{j|t}\}) = \lambda_{max}(T)\mathcal{D}(\{w_{j|t}\}), \\
    \mathcal{D}(\{w_{j|t}\}) = (\gamma^{-1} - 1)^{-2}||w_{j|t} - w_{j-2|t}||_T^2, 
    \end{gather}
where $ T = I_{(\vec{1},\vec{1}/\rho^2)}$ and where $\gamma$ is the $q$-linear convergence rate \cite{optimal_step_size} parameter. From (Equation 18\cite{SVL_ADMM_MPC}), $ \forall j \geq 2, ||w_{j|t} - w^*_t||_T^2 \leq \mathcal{D}(\{w_{j|t}\})$. We then can write the suboptimality bound (10)
from $\lambda_{max}(T)||w_{\hat{j}|t} - w^*_t||_T^2 \geq ||w_{\hat{j}|t} - w^*_t||^2 \geq ||\hat{\eta}^{\Sigma_t}(\theta_t)-\eta^{*\Sigma_t}(\theta_t)||^2
    \geq ||\hat{u}^{\Sigma_t}(\theta_t)-u^{*\Sigma_t}(\theta_t)||^2$, where we extract $\hat{\eta}^{\Sigma_t}(\theta_t)$ from $w_{\hat{j}|t}$. From the previous work \cite{SVL_ADMM_MPC}, $\hat{\eta}$ is the same when extracted from either $y_{\hat{j}|t},z_{\hat{j}|t}$.

\begin{remark}
    The following analysis is valid for any optimizer of the form (5) which is convergent in $\Psi(\Sigma_t)$, and for which properties (8) and (10) hold.
\end{remark}

\section{Recursive Feasibility and Stability}

We first note the following Lipschitz continuity properties (Appendix D\cite{lipshitz_variational}). For any two values $\Sigma_{a},\Sigma_{b}$ and any two values $\theta_a, \theta_b$, assuming the optimal solution exists,
\begin{gather}
    |\psi^{*\Sigma_{a}}(\theta_a) - \psi^{*\Sigma_{a}}(\theta_b)| \leq \beta_{\chi}||\theta_a - \theta_b||, \\
    ||u^{*\Sigma_{a}}(\theta_a) - u^{*\Sigma_{b}}(\theta_a)|| \leq \phi ||\Sigma_{a} - \Sigma_{b}||,
    \end{gather}
    and for the same value of $\Sigma_a$
    \begin{gather}
    ||w^*_a - w^*_b||_T \leq \beta_w ||\theta_a - \theta_b||.
\end{gather}

 The values $\beta_{\chi} > 1, \phi, \beta_w$ can be estimated through sampling based methods or from the computed (offline) explicit solution. In what follows we first discuss the constant reference case and highlight recursive feasibility and stability properties. We then address the varying reference case.
\subsection{Constant Reference}
\subsubsection{Recursive Feasibility}

For a constant reference $v \in \mathcal{V}$ with associated equilibrium pair $(x_v,u_v)$ and associated constraint tightening parameter $\Sigma' \geq 0$, in the exact optimizer setting, given a state $x_{t}$, the closed-loop system evolves according to
\begin{equation} 
x_{t+1}^\circ = Ax_{t} + Bu^{*\Sigma'}(x_t,v).
\end{equation}
A region of attraction (ROA) estimate for (21) is given by (Equation 17 \cite{main_suboptimal}):
\begin{equation}
    \Gamma^{\Sigma'}(v) = \{x \in \mathbb{R}^{n_x} \ | \ \psi^{\Sigma'}(\theta) \leq \sqrt{dp^{\Sigma'}_{v}}\}
\end{equation}
where $d = N \lambda_{min}(Q)/\lambda_{max}(P) + 1$ and $p^{\Sigma'}_{v} > 0$ is such that:
\begin{equation}
    \Omega^{\Sigma'}(v) = \{x \ | \ F(\tilde{x}) \leq p^{\Sigma'}_{v}\} \subseteq \{ x \ | \ -K\tilde{x} + u_{v} \in \mathcal{U}^{\Sigma'}, \ x \in \mathcal{X}^{\Sigma'} \}. 
 \end{equation}
\begin{remark}
    The value $p^{0}_{v}$ can always be chosen such that $p^{\Sigma'}_{v} \leq p^{0}_{v}$, thus $\Gamma^{\Sigma'}(v) \subseteq \Gamma(v)^0 \subseteq \Psi(0)$ and also $\Gamma^{\Sigma'}(v) \subseteq \Psi(\Sigma') \subseteq \Psi(0)$. 
\end{remark}

Then with Assumptions 1-3, if follows from (Theorem 1 Proof \cite{stable_no_terminal_constraint}),
\begin{equation}
  V^{\Sigma'}(x^{\circ}_{t+1},v) \leq V^{\Sigma'}(x_t,v) - ||\tilde{x}_{t}||_Q^2  
\end{equation}
for $x_{t} \in \Gamma^{\Sigma'}(v)$.

To show recursive feasibility of $x_t^{A1}$, i.e. when inexactness in the optimizer must be accounted for, we construct an invariant set under $x_t^{A1}$ by considering two subsystems. First we define a set $\check{\Gamma}^{\Sigma}(v)$ and constraint tightening parameter $\check{\Sigma}_v$ where (Equation 21 \cite{main_suboptimal})
\begin{gather}
    \check{\Gamma}^{\Sigma}(v) = \{x \ | \ \psi^0(\theta) \leq \check{\psi}_v \} \subseteq \Gamma^{\check{\Sigma}}(v) \\
    \check{\Delta}^{\Sigma}(v) = \{ \Sigma \ | \ \Sigma \leq \check{\Sigma}_v \}
\end{gather}
and where the value of $\check{\psi}_v$ is chosen such that $\check{\psi}_v \leq \sqrt{dp^{\check{\Sigma}}_{v}}$.

We next define the following inequalities for the value function $\psi^0(\theta_t)$ and constraint tightening $\Sigma_t$ which will be used to show $\psi^0(\theta_t) \leq \check{\psi}_v, \ \Sigma_t \leq \check{\Sigma}_v$.
\begin{lemma}
\emph{Suppose Assumptions 1-4 hold, $(x_t, \Sigma_t) \in \check{\Gamma}^{\Sigma}(v) \times \check{\Delta}^{\Sigma}(v)$, $v_t = v \ \forall t$, and the constraint tightening update is as follows:
\begin{equation}
    \Sigma_{t+1} = \mathcal{E}^{\Sigma}(\theta_{t+1}, \theta_t, \Sigma_t) = \pi_1 \Sigma_t + \pi_2||\theta_{t+1} - \theta_t||,
\end{equation}
where $0 < \pi_1, \pi_2 < 1$ are constants. Then the following inequalities hold:}
\begin{gather}
    \psi^0(\theta_{t+1}) \leq (1-\alpha_1)\psi^0(\theta_t) + \zeta_1\Sigma_t \\
    \Sigma_{t+1} \leq (1 - \alpha_2)\Sigma_t + \zeta_2 \psi^0(\theta_t),
\end{gather}
\end{lemma}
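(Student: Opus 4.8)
\noindent\emph{Proof plan.} The plan is to compare, at each step, the Algorithm~1 state $x_{t+1}^{A1}=Ax_t+B\hat{u}^{\Sigma_t}(\theta_t)$ with the exact successor $x_{t+1}^{\circ}=Ax_t+Bu^{*\Sigma_t}(\theta_t)$ of (21) for the \emph{same} tightening $\Sigma_t$, to absorb the mismatch through the Lipschitz bounds (18)--(20), and to upgrade the additive cost decrease (24) into a multiplicative contraction. First I would record three preliminary facts. (i) At termination of the while loop of Algorithm~1 one has $\mathcal{D}^{\mathsf{r}}(\{w_{\hat{j}|t}\})\le\Sigma_t^2$, so (10) gives $\|\hat{u}^{\Sigma_t}(\theta_t)-u^{*\Sigma_t}(\theta_t)\|\le\Sigma_t$ and hence $\|x_{t+1}^{A1}-x_{t+1}^{\circ}\|=\|B(\hat{u}^{\Sigma_t}(\theta_t)-u^{*\Sigma_t}(\theta_t))\|\le\overline{\lambda}(B)\Sigma_t$; moreover, $\mathsf{r}_{\hat{j}|t}\le\Sigma_t$ together with (8) keeps the relevant $\mathcal{P}^{0}$ problems feasible, so the value functions below are defined. (ii) Tightening only raises the optimal value, so $V^{0}(\theta)\le V^{\Sigma_t}(\theta)$; and since $\psi^{\Sigma}(\theta)=\|\tilde{\chi}^{*\Sigma}(\theta)\|_{H^{\chi}}$ and the full optimal primal solution $\chi^{*\Sigma}(\theta)$ is Lipschitz in $\Sigma$ by the same variational argument \cite{lipshitz_variational} that underlies (19), one gets $\psi^{\Sigma_t}(\theta)\le\psi^{0}(\theta)+\phi_{\psi}\Sigma_t$ for a constant $\phi_{\psi}$. (iii) On $\Gamma^{\Sigma_t}(v)$ the particular value $d=N\lambda_{min}(Q)/\lambda_{max}(P)+1$ in (22) yields the reverse estimate $\|\tilde{x}\|_Q^2\ge\delta\,V^{\Sigma_t}(\theta)$ with $\delta=\lambda_{min}(Q)/(d\,\lambda_{max}(P))\in(0,1)$ (the bound $\delta<1$ using $P\succeq Q$, which follows from Assumption~3); this is obtained by splitting $\Gamma^{\Sigma_t}(v)$ into the terminal region $\Omega^{\Sigma_t}(v)$, where $V^{\Sigma_t}(\theta)\le F(\tilde{x})\le\lambda_{max}(P)\|\tilde{x}\|^2$ by applying the LQR feedback over the horizon and telescoping the Riccati identity, and its complement, where $F(\tilde{x})>p^{\Sigma_t}_v$ so that $V^{\Sigma_t}(\theta)\le d\,p^{\Sigma_t}_v<d\,F(\tilde{x})$.

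To prove (30): by (25), and since $\Gamma^{(\cdot)}(v)$ and $p^{(\cdot)}_v$ shrink as the tightening parameter grows, the hypothesis $(x_t,\Sigma_t)\in\check{\Gamma}^{\Sigma}(v)\times\check{\Delta}^{\Sigma}(v)$ gives $x_t\in\check{\Gamma}^{\Sigma}(v)\subseteq\Gamma^{\check{\Sigma}}(v)\subseteq\Gamma^{\Sigma_t}(v)$, so (24) applies with $\Sigma'=\Sigma_t$: $V^{\Sigma_t}(x_{t+1}^{\circ},v)\le V^{\Sigma_t}(x_t,v)-\|\tilde{x}_t\|_Q^2$. Using (iii), $V^{\Sigma_t}(x_{t+1}^{\circ},v)\le(1-\delta)V^{\Sigma_t}(x_t,v)$, hence $\psi^{\Sigma_t}(x_{t+1}^{\circ},v)\le\overline{\sigma}\,\psi^{\Sigma_t}(x_t,v)$ with $\overline{\sigma}=\sqrt{1-\delta}\in(0,1)$. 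Applying $\psi^{0}\le\psi^{\Sigma_t}$ on the left and (ii) on the right yields $\psi^{0}(x_{t+1}^{\circ},v)\le\overline{\sigma}(\psi^{0}(\theta_t)+\phi_{\psi}\Sigma_t)$, and finally (18) with $\Sigma_a=0$ transfers this to the Algorithm successor: $\psi^{0}(\theta_{t+1})\le\psi^{0}(x_{t+1}^{\circ},v)+\beta_{\chi}\overline{\lambda}(B)\Sigma_t$. Chaining the three inequalities gives (30) with $\alpha_1=1-\overline{\sigma}$ and $\zeta_1=\overline{\sigma}\phi_{\psi}+\beta_{\chi}\overline{\lambda}(B)$.

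To prove (31): since $v_t=v$ is constant, $\theta_{t+1}-\theta_t=(x_{t+1}^{A1}-x_t,0)$, and the equilibrium identity $(A-I)x_v+Bu_v=0$ gives $x_{t+1}^{A1}-x_t=(A-I)\tilde{x}_t+B(\hat{u}^{\Sigma_t}(\theta_t)-u_v)$. I would bound $\|\tilde{x}_t\|\le\psi^{0}(\theta_t)/\sqrt{\lambda_{min}(Q)}$ from $\|\tilde{x}_t\|_Q^2\le V^{0}(\theta_t)$ (the $k=0$ state-cost summand), and $\|\hat{u}^{\Sigma_t}(\theta_t)-u_v\|\le\|\hat{u}^{\Sigma_t}(\theta_t)-u^{*\Sigma_t}(\theta_t)\|+\|u^{*\Sigma_t}(\theta_t)-u_v\|\le\Sigma_t+\psi^{\Sigma_t}(\theta_t)/\sqrt{\lambda_{min}(R)}\le\Sigma_t+(\psi^{0}(\theta_t)+\phi_{\psi}\Sigma_t)/\sqrt{\lambda_{min}(R)}$, using (i), the fact that $\|u^{*\Sigma_t}(\theta_t)-u_v\|_R^2$ is a summand of $V^{\Sigma_t}(\theta_t)$, and (ii). Substituting into $\Sigma_{t+1}=\pi_1\Sigma_t+\pi_2\|x_{t+1}^{A1}-x_t\|$ collects a term in $\psi^{0}(\theta_t)$ with coefficient $\zeta_2=\pi_2(\overline{\lambda}(A-I)/\sqrt{\lambda_{min}(Q)}+\overline{\lambda}(B)/\sqrt{\lambda_{min}(R)})$ and a term in $\Sigma_t$ with coefficient $\pi_1+\pi_2\overline{\lambda}(B)(1+\phi_{\psi}/\sqrt{\lambda_{min}(R)})$; choosing $\pi_2$ small enough relative to $\pi_1$ and the plant/optimizer constants (compatible with the stated bounds $0<\pi_1,\pi_2<1$) makes the latter $<1$, giving (31) with $\alpha_2=1-\pi_1-\pi_2\overline{\lambda}(B)(1+\phi_{\psi}/\sqrt{\lambda_{min}(R)})$.

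The step I expect to be the main obstacle is (iii): converting the purely additive decrease (24) into the multiplicative contraction $\psi^{\Sigma_t}(x_{t+1}^{\circ},v)\le\overline{\sigma}\,\psi^{\Sigma_t}(x_t,v)$ with an explicit $\overline{\sigma}<1$ forces one to exploit the precise definition of $d$ and the implicit-terminal-set geometry behind (22)--(23), and to verify monotonicity of $\Gamma^{(\cdot)}(v)$ and $p^{(\cdot)}_v$ so that (24) is applicable at $\Sigma'=\Sigma_t$. A secondary subtlety is that fact (ii) in its needed direction, $\psi^{\Sigma_t}(\theta)\le\psi^{0}(\theta)+\phi_{\psi}\Sigma_t$, must be established for the square-root value function $\psi$ (for which the solution-map regularity of \cite{lipshitz_variational} gives a term linear in $\Sigma_t$) rather than for $V$ itself (for which only a $\sqrt{\Sigma_t}$-type bound would follow, destroying the affine structure of the recursions); keeping every estimate affine in $(\psi^{0}(\theta_t),\Sigma_t)$ so that (30)--(31) close with $\alpha_1,\alpha_2\in(0,1)$ is the bookkeeping core of the argument.
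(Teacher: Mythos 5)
Your proposal is essentially correct, but it reaches (28)--(29) by a genuinely different route than the paper. The paper's Appendix A compares the Algorithm~1 successor with $x^\circ_{t+1}=Ax_t+Bu^{*0}(x_t,v)$, i.e.\ the exact solution of the \emph{untightened} problem, so the whole control mismatch is handled by the triangle split $\|\hat{\tilde u}^{\Sigma_t}-\tilde u^{*0}\|\le\|\hat{\tilde u}^{\Sigma_t}-\tilde u^{*\Sigma_t}\|+\|\tilde u^{*\Sigma_t}-\tilde u^{*0}\|\le(1+\phi)\Sigma_t$ using only the termination criterion and the stated property (19); and the multiplicative contraction is obtained not from the terminal-set geometry but from the Lipschitz property (18): since $\psi^0(x_v,v)=0$, one has $\|\tilde x_t\|\ge\psi^0(\theta_t)/\beta_\chi$, which plugged into (24) at $\Sigma'=0$ (legitimate because $\check\Gamma^\Sigma(v)\subseteq\Gamma^{\check\Sigma}(v)\subseteq\Gamma^0(v)$ by Remark~2, so no monotonicity of $p^{(\cdot)}_v$ in $\Sigma$ is needed) gives $\alpha_1=1-\sqrt{1-\lambda_{min}(Q)/\beta_\chi^2}$ and $\zeta_1=\sqrt{\overline\lambda(B)}\beta_\chi(\phi+1)$; the second recursion is closed the same way via a bound on $\|\theta_{t+1}-\theta_t\|$ in terms of $\psi^0(\theta_t)$ and the same $(1+\phi)\Sigma_t$ term. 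Your alternative --- referencing $u^{*\Sigma_t}$, deriving $\|\tilde x\|_Q^2\ge\delta V^{\Sigma_t}$ with $\delta=\lambda_{min}(Q)/(d\,\lambda_{max}(P))$ from the $\Omega/\Gamma$ split, and then passing between $\psi^{\Sigma_t}$ and $\psi^0$ --- is sound, and it buys a contraction rate tied to the terminal-set constant $d$ rather than to the (possibly crude, sampled) constant $\beta_\chi$, which can be less conservative.

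Two caveats. First, your step (ii), $\psi^{\Sigma_t}(\theta)\le\psi^0(\theta)+\phi_\psi\Sigma_t$, invokes Lipschitz continuity of the \emph{value} (equivalently of the full minimizer) in the tightening parameter, which is not among the paper's stated properties (18)--(20); it is plausible under Assumption~4 by the same parametric-QP regularity, but it introduces a new constant $\phi_\psi$ that would have to be estimated. It is also avoidable: in both places where you use it you could instead route the comparison through $u^{*0}$ via (19) exactly as the paper does (e.g.\ $\|u^{*\Sigma_t}(\theta_t)-u_v\|\le\phi\Sigma_t+\psi^0(\theta_t)/\sqrt{\lambda_{min}(R)}$), which removes $\phi_\psi$ and also removes your need for the monotone choice of $p^{(\cdot)}_v$ and the inclusion $\Gamma^{\check\Sigma}(v)\subseteq\Gamma^{\Sigma_t}(v)$. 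Second, because the lemma's constants are fixed in Appendix~A and reused downstream (Assumption~6, Theorem~1, and the Online Procedure all use the specific expressions for $\alpha_1,\zeta_1,\alpha_2,\zeta_2$), your proof establishes the inequalities with \emph{different} constants; that proves the lemma as an existence statement but would require propagating your expressions through the later conditions if used in place of the paper's.
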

\emph{where $\alpha_1,\alpha_2, \zeta_1,\zeta_2$ are constants listed in Appendix A.}

\begin{proof}
The proof follows (Proof of Lemma 2\cite{main_suboptimal}). As we subsequently rely on derivations in the proof and for completeness, we include the proof in Appendix A.
\end{proof}

The following assumption facilitates the recursive feasibility property that is stated in Lemma 2: 
\begin{assumption}
 In (28), (29), $\alpha_1, \alpha_2 < 1$, $\zeta_2/\alpha_2 < \alpha_1 / \zeta_1$, and $\alpha_1 \sqrt{d} < 1$. 
\end{assumption}
\begin{remark}
    The constant $\alpha_1$ can be made to satisfy $\alpha_1 < 1$ by tuning $Q$, and $\alpha_2$ satisfies $\alpha_2< 1$ for small values of $\pi_2$. The relations $\zeta_2/\alpha_2 < \alpha_1 / \zeta_1, \ \alpha_1 \sqrt{d} < 1$ can be satisfied with appropriate values of $\beta_{\chi},\phi$, which restricts the OCP (2) design.
\end{remark}
    
Lemma 1 holds given $x_t \in \check{\Gamma}^{\Sigma}(v), \ \Sigma_t \in \check{\Delta}^{\Sigma}(v)$, thus we have the following Lemma to show (28), (29) hold for $\{x_t^{A1}\}$.
\begin{lemma} (Proposition 1\cite{main_suboptimal})
\emph{Consider $(x_t,\Sigma_t) \in \check{\Gamma}^{\Sigma}(v) \times \check{\Delta}^{\Sigma}(v),\ v_t=v \ \forall t$. Suppose Assumptions 1-4 hold, then if $\check{\psi}_v, \check{\Sigma}_v$ satisfy
\begin{equation}
    (\zeta_2/\alpha_2)\check{\psi}_v \leq \check{\Sigma}_v \leq (\alpha_1/\zeta_1)\check{\psi}_v
\end{equation}
then $\mathcal{P}^{\Sigma_t}(x_t,v)$ is recursively feasible and $\check{\Gamma}^{\Sigma}(v) \times \check{\Delta}^{\Sigma}(v)$ is an invariant set for $\{x_t^{A1}\}$}.
\end{lemma}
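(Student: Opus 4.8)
The plan is to establish both assertions at once by a single induction on $t$, feeding the two scalar inequalities (28)--(29) of Lemma~1 into the consistency condition (30). The induction hypothesis at time $t$ is $(x_t^{A1},\Sigma_t)\in\check\Gamma^{\Sigma}(v)\times\check\Delta^{\Sigma}(v)$, i.e.\ $\psi^0(\theta_t)\le\check\psi_v$ and $0\le\Sigma_t\le\check\Sigma_v$ with $\theta_t=(x_t^{A1},v)$; the base case is the assumed initial condition. Before starting I would note that Assumption~7 ($\zeta_2/\alpha_2<\alpha_1/\zeta_1$) makes the interval in (30) nonempty, so a pair $\check\psi_v,\check\Sigma_v>0$ satisfying (30) actually exists and the hypothesis of the lemma is not vacuous.

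For the inductive step I would assume $(x_t^{A1},\Sigma_t)\in\check\Gamma^{\Sigma}(v)\times\check\Delta^{\Sigma}(v)$ and first check feasibility at time $t$: by (25) and Remark~4, $x_t^{A1}\in\check\Gamma^{\Sigma}(v)\subseteq\Gamma^{\check\Sigma}(v)\subseteq\Psi(\check\Sigma_v)$, and since a larger tightening level shrinks the feasible set while $\Sigma_t\le\check\Sigma_v$, one has $\Psi(\check\Sigma_v)\subseteq\Psi(\Sigma_t)$, hence $x_t^{A1}\in\Psi(\Sigma_t)$ and $\mathcal P^{\Sigma_t}(x_t^{A1},v)$ is feasible. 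By Proposition~1 of \cite{SVL_ADMM_MPC} together with (8), Algorithm~1 then terminates with a control $\hat u^{\Sigma_t}(\theta_t)$ that keeps $x_{t+1}^{A1}=Ax_t^{A1}+B\hat u^{\Sigma_t}(\theta_t)$ constraint admissible, so $\theta_{t+1}=(x_{t+1}^{A1},v)$ is well defined.

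To close the induction I would observe that, since $(x_t^{A1},\Sigma_t)\in\check\Gamma^{\Sigma}(v)\times\check\Delta^{\Sigma}(v)$, Lemma~1 applies and (28)--(29) hold along the closed loop at time $t$. Substituting $\psi^0(\theta_t)\le\check\psi_v$ and $\Sigma_t\le\check\Sigma_v$ into (28) gives $\psi^0(\theta_{t+1})\le(1-\alpha_1)\check\psi_v+\zeta_1\check\Sigma_v$, which is at most $\check\psi_v$ exactly when $\zeta_1\check\Sigma_v\le\alpha_1\check\psi_v$, i.e.\ the right inequality of (30); substituting into (29) gives $\Sigma_{t+1}\le(1-\alpha_2)\check\Sigma_v+\zeta_2\check\psi_v$, which is at most $\check\Sigma_v$ exactly when $\zeta_2\check\psi_v\le\alpha_2\check\Sigma_v$, i.e.\ the left inequality of (30); and $\Sigma_{t+1}\ge0$ is immediate from (27). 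Hence $(x_{t+1}^{A1},\Sigma_{t+1})\in\check\Gamma^{\Sigma}(v)\times\check\Delta^{\Sigma}(v)$, which completes the induction; invariance of $\check\Gamma^{\Sigma}(v)\times\check\Delta^{\Sigma}(v)$ for $\{x_t^{A1}\}$ follows, and applying the feasibility step at each $t$ yields recursive feasibility of $\mathcal P^{\Sigma_t}(x_t,v)$.

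The step I expect to be the main obstacle is not this induction, which is merely a comparison of two coupled scalar recursions against the box defined by $\check\psi_v$ and $\check\Sigma_v$, but rather the ingredients it consumes: first, that (28)--(29) of Lemma~1 are valid for the \emph{inexact} closed-loop successor $x_{t+1}^{A1}$ and not just for the exact one $x_{t+1}^\circ$ of (21) --- this is where the suboptimality bound (10) and the tightening $\Sigma_t$ enter, and is handled in Appendix~A following \cite{main_suboptimal}; and second, that the $\psi^0$-sublevel set $\check\Gamma^{\Sigma}(v)$ is genuinely contained in the ROA $\Gamma^{\check\Sigma}(v)$ of (25), which rests on comparing $\psi^0$ with $\psi^{\check\Sigma}$ and, ultimately, on $\alpha_1\sqrt d<1$ from Assumption~7 together with the choice $\check\psi_v\le\sqrt{dp^{\check\Sigma}_v}$. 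Once (25) and Lemma~1 are in hand, the present statement follows as above.
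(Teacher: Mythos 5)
Your proof is correct and takes essentially the same route as the paper, which simply defers to the induction of Proposition 1 in the cited reference: feasibility of $\mathcal{P}^{\Sigma_t}(x_t,v)$ from the set inclusions, constraint admissibility of the suboptimal input via the termination criterion and (8), and one-step invariance by substituting $\psi^0(\theta_t)\leq\check{\psi}_v$, $\Sigma_t\leq\check{\Sigma}_v$ into (28)--(29) and invoking (30). The only slips are labels: the conditions $\zeta_2/\alpha_2<\alpha_1/\zeta_1$ and $\alpha_1\sqrt{d}<1$ are in Assumption 6 (not 7), and the inclusion $\Gamma^{\check{\Sigma}}(v)\subseteq\Psi(\check{\Sigma}_v)$ comes from Remark 2 (not Remark 4).
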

\begin{proof}
We use the induction argument as in (Proof of Proposition 1 \cite{main_suboptimal}) to show $(x_t,\Sigma_t) \in \check{\Gamma}^{\Sigma}(v) \times \check{\Delta}^{\Sigma}(v) \implies (x_{t+1},\Sigma_{t+1}) \in \check{\Gamma}^{\Sigma}(v) \times \check{\Delta}^{\Sigma}(v)$ and is omitted here. We make use of the fact that from Proposition 1 and the termination criteria in Algorithm 1, $\mathcal{P}^0(x_t,v)$ is feasible.
\end{proof}

\subsubsection{Asymptotic Stability}
Consider again $(x_t,\Sigma_t) \in \check{\Gamma}^{\Sigma}(v) \times \check{\Delta}^{\Sigma}(v), v_t=v \ \forall t$. We note the flow $\{x_t^{A1}\}$ can be written as \textbf{Subsys 1} in the following two subsystems
\begin{subequations}
\begin{gather}
    \textbf{Subsys 1:} \ x_{t+1} = Ax_t + B\hat{u}^{\Sigma_t}(\theta_t) \\
    \textbf{Subsys 2:} \ \Sigma_{t+1} = \mathcal{E}^{\Sigma}(\theta_{t+1}, \theta_t,\Sigma_t)
\end{gather}
\end{subequations}

where $\Sigma_t$ and $(\theta_{t+1}, \theta_t)$ are inputs for subsystems (31a), (31b) respectively, and where the values $\psi^0(\theta_t)$ and $\Sigma_t$ can be shown to be ISS-Lyapunov functions (Equations 2,3 \cite{ISS_Lypaunov},) for the subsystems (31a), (31b) respectively. Then the small gain theorem can be used to show $x_t^{A1} \rightarrow x_{v|t}$, i.e. asymptotic stability with ROA estimate $\check{\Gamma}^\Sigma(v)$. The small gain theorem proof for (31) is in (Proof of Theorem 2\cite{main_suboptimal}).

\subsection{Varying References}
In this section we consider modified reference commands $\hat{v}_t$ generated from Algorithm 1, Line 1. We show that with suitable choices of $\{\kappa_t\}, \ x_{t}^{A1} \in \check{\Gamma}^{\Sigma}(\hat{v}_{t}) \implies x_{t+1}^{A1} \in \check{\Gamma}^{\Sigma}(\hat{v}_{t+1})$, and we also demonstrate reference convergence, $\hat{v}_i \rightarrow v_t, \ i \geq t$ if $ \ v_{t+1} = v_t \ \forall t$. 

We first note the following property of the sequence of references $\{v_t\}$, which follows from convexity of $\mathcal{V}$.
\begin{proposition}
    \emph{Suppose that $v_a,v_b \in \{v_t\}$ then $v' = v_a + \kappa (v_b - v_a) \in \mathcal{V}$ for $0 \leq \kappa \leq 1$.}
\end{proposition}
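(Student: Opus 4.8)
The plan is to reduce the statement to a direct application of convexity of $\mathcal{V}$. First I would recall from the setup in Section~3 that the sequence of desired reference commands satisfies $\{v_t\}, \ v_t \in \mathcal{V}$, where $\mathcal{V}$ is a compact and convex subset of $\mathbb{R}^{n_v}$. Consequently, if $v_a, v_b \in \{v_t\}$, then in particular $v_a \in \mathcal{V}$ and $v_b \in \mathcal{V}$.

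Next I would rewrite the candidate point in the standard convex-combination form: for $0 \leq \kappa \leq 1$,
\begin{equation}
v' = v_a + \kappa(v_b - v_a) = (1-\kappa) v_a + \kappa v_b.
\end{equation}
Since $1-\kappa \geq 0$, $\kappa \geq 0$, and $(1-\kappa) + \kappa = 1$, the point $v'$ is a convex combination of $v_a$ and $v_b$. By convexity of $\mathcal{V}$, every convex combination of two of its elements lies in $\mathcal{V}$, hence $v' \in \mathcal{V}$, which is the claim.

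There is no substantive obstacle here; the only thing to be careful about is the bookkeeping that $v_a, v_b$ being members of the sequence $\{v_t\}$ indeed places them in $\mathcal{V}$ (so that the convexity hypothesis applies), and that the affine combination written in Algorithm~1, Line~1 is exactly a convex combination when $\kappa \in [0,1]$. The endpoint cases $\kappa = 0$ and $\kappa = 1$ give $v' = v_a$ and $v' = v_b$ respectively, which are trivially in $\mathcal{V}$, so the interesting range is $\kappa \in (0,1)$, still covered by convexity. This lemma will subsequently be used to guarantee that the modified references $\hat v_t$ produced by the recursion $\hat v_t \leftarrow \kappa_t(v_t - \hat v_{t-1}) + \hat v_{t-1}$ remain admissible, i.e., stay in $\mathcal{V}$, by an induction on $t$ using this proposition at each step.
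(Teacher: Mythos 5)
Your proof is correct and follows exactly the paper's reasoning: the paper justifies this proposition solely by noting it "follows from convexity of $\mathcal{V}$", and your rewriting of $v' = (1-\kappa)v_a + \kappa v_b$ as a convex combination is precisely that argument spelled out.
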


From (18), for $\theta_c = (x,v_c), \theta_d=(x,v_d)$ it follows that:
\begin{equation}
|\psi^0(\theta_c) - \psi^0(\theta_d)| \leq \beta_{\chi}||v_c - v_d||.
\end{equation}
Furthermore we have the following:
\begin{equation}
\hat{\psi}^{\Sigma}(\theta_t) \leq \beta_{\chi}\epsilon \implies x_i \in \Psi (\underline{\Sigma}), \ \forall v \in \mathcal{V}, \ i \geq t    
\end{equation}
where $\epsilon > 0$ is a small constant, the existence of which follows from Assumption 5. We now describe how to choose $\kappa_t$.

\subsubsection{Choice of $\kappa_t$}
To ensure $x_{t+1}^{A1} \in \check{\Gamma}^{\Sigma}(\hat{v}_{t+1})$, $\kappa_{t+1}$ must be chosen appropriately.

We first note an upper bound on $\psi^0(\theta_{t+1})$ as
 \begin{equation}
     \psi^0(x_{t+1},\hat{v}_{t+1}) \leq 
      |\psi^0(x_{t+1},\hat{v}_{t+1}) -  \psi^0(x_{t+1},\hat{v}_t)| + \psi^0(x_{t+1},\hat{v}_t)
     \leq \beta_{\chi}||\hat{v}_{t+1} - \hat{v}_t|| + \psi^{0}(x_{t},\hat{v}_t) + \zeta_1 \Sigma_t
 \end{equation}
 where we have used (32), that $\psi^0(x_{t+1},\hat{v}_t) \leq \psi^0(x^\circ_{t+1},\hat{v}_t) + \big| \psi^0(x_{t+1},\hat{v}_t) - \psi^0(x^\circ_{t+1},\hat{v}_t) \big| \leq  \psi^0(x^\circ_{t+1},\hat{v}_t) + \zeta_1 \Sigma_t$ from (A3), and then used (24).

We modify the definition of $\mathcal{E}^{\Sigma}$  in (27):
\begin{equation}
    \mathcal{E}^{\Sigma}(\theta_{t+1}, \theta_{t}, \Sigma_t) = \check{\Sigma}_{\hat{v}_{t+1}} \ \textbf{if} \ \kappa_{t+1} > 0.
\end{equation}

Then it is possible to ensure $x_{t+1}^{A1} \in \check{\Gamma}^{\Sigma}(\hat{v}_{t+1})$ at time instant $t+1$ if $\kappa_{t+1}>0$ and the following conditions hold:
\begin{subequations}
    \begin{gather}
     \beta_{\chi}||\hat{v}_{t+1} - \hat{v}_t|| + \psi^0(x_{t},\hat{v}_{t}) + \zeta_1 \Sigma_t \leq \check{\psi}_{\hat{v}_{t+1}} \leq  \sqrt{dp^{\Sigma_{t+1}}_{\hat{v}_{t+1}}} \\
    (\zeta_1/\alpha_1) \Sigma_{t+1} \leq \check{\psi}_{\hat{v}_{t+1}} \leq (\alpha_2/\zeta_2) \Sigma_{t+1}.  
 \end{gather}
 \end{subequations}
 
 The following theorem shows that the inequalities (36) can be satisfied with the choice of $\kappa_{t+1} = \epsilon$ if over a sufficiently long time window there is no modification of reference.
 
 \begin{theorem}
     \emph{Let $\epsilon$ satisfy (33). Consider references $\{\hat{v}_t\}$ from Algorithm 1, where $v_{t+1}= v_t \ \forall t$, and suppose at time instant $i'$, $||\hat{v}_{i'} - v_t|| > \epsilon$. 
     Then there exists a time instant $i > i'$ and $\hat{v}_i \in \mathcal{V}$ such that if $\kappa_t = 0, \ i' \leq t \leq i-1$ then $||\hat{v}_i - \hat{v}_{i-1}|| = \epsilon$ and (36) is satisfied (where $t+1=i$)}.
 \end{theorem}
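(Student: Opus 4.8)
The plan is to run the constant-reference analysis of Section~6.1 on the frozen window and then take a single short step toward $v:=v_t$. Since $\kappa_t=0$ for $i'\le t\le i-1$, Line~1 of Algorithm~1 gives $\hat v_{i'}=\hat v_{i'+1}=\cdots=\hat v_{i-1}=:\bar v$, and on that window the update $\mathcal E^{\Sigma}$ reverts from (35) to (27). Also $\bar v\in\mathcal V$: each $\hat v_t=\kappa_t v_t+(1-\kappa_t)\hat v_{t-1}$ is a convex combination of points of $\mathcal V$, so $\hat v_t\in\mathcal V$ for all $t$ by induction and convexity of $\mathcal V$. On the window the desired command is the constant $\bar v$, and we may take $(x_{i'}^{A1},\Sigma_{i'})\in\check\Gamma^{\Sigma}(\bar v)\times\check\Delta^{\Sigma}(\bar v)$, as this is the invariant maintained by the recursive-feasibility argument of this section that the present theorem feeds.

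Hence, by Lemma~2, $(x_t^{A1},\Sigma_t)$ stays in the invariant set $\check\Gamma^{\Sigma}(\bar v)\times\check\Delta^{\Sigma}(\bar v)$ for $t\ge i'$ while the reference is held, and the ISS/small-gain argument for the subsystems (31) gives $x_t^{A1}\to x_{\bar v}$. Therefore $\theta_t=(x_t^{A1},\bar v)\to(x_{\bar v},\bar v)$, so $\psi^0(\theta_t)\to0$ (continuity of $\psi^0$, with $\psi^0(x_{\bar v},\bar v)=0$), and, since $\|\theta_{t+1}-\theta_t\|\to0$ while $\pi_1<1$ in (27), $\Sigma_t\to0$. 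In particular $\psi^0(x_{i-1}^{A1},\bar v)+\zeta_1\Sigma_{i-1}\to0$ as the length of the window grows.

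Put $\kappa_i:=\epsilon/\|\bar v-v\|$; this lies in $(0,1)$ because $\|\bar v-v\|=\|\hat v_{i'}-v_t\|>\epsilon$. Then $\hat v_i=\bar v+\kappa_i(v-\bar v)$ lies on the segment $[\bar v,v]\subset\mathcal V$, so $\hat v_i\in\mathcal V$ and $\|\hat v_i-\hat v_{i-1}\|=\kappa_i\|v-\bar v\|=\epsilon$; and since $\kappa_i>0$, (35) sets $\Sigma_i=\check\Sigma_{\hat v_i}$. Let $c_0:=\inf_{v'\in\mathcal V}\check\psi_{v'}$; by Assumptions~5--6 and continuity of $v'\mapsto\check\psi_{v'}$ on the compact set $\mathcal V$ we have $c_0>0$, and $\epsilon$ may be taken (consistently with (33)) small enough that $\beta_\chi\epsilon<c_0$. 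Using the convergence above, fix $i>i'$ so large that $\psi^0(x_{i-1}^{A1},\bar v)+\zeta_1\Sigma_{i-1}\le c_0-\beta_\chi\epsilon$.

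It remains to verify (36) with $t=i-1$. The left inequality of (36a) is $\beta_\chi\|\hat v_i-\hat v_{i-1}\|+\psi^0(x_{i-1}^{A1},\bar v)+\zeta_1\Sigma_{i-1}=\beta_\chi\epsilon+\psi^0(x_{i-1}^{A1},\bar v)+\zeta_1\Sigma_{i-1}\le c_0\le\check\psi_{\hat v_i}$ by the choice of $i$; the right inequality of (36a), $\check\psi_{\hat v_i}\le\sqrt{dp^{\Sigma_i}_{\hat v_i}}=\sqrt{dp^{\check\Sigma_{\hat v_i}}_{\hat v_i}}$, is exactly the defining property of $\check\psi_{v'}$ at $v'=\hat v_i$. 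Finally, substituting $\Sigma_i=\check\Sigma_{\hat v_i}$ into (36b) turns it into $(\zeta_2/\alpha_2)\check\psi_{\hat v_i}\le\check\Sigma_{\hat v_i}\le(\alpha_1/\zeta_1)\check\psi_{\hat v_i}$, i.e.\ (30) at $v'=\hat v_i$, which holds by construction of the pair $(\check\psi_{\hat v_i},\check\Sigma_{\hat v_i})$; iterating this step moreover shrinks $\|\hat v_i-v\|$ by $\epsilon$, which will yield $\hat v_i\to v_t$. The main obstacle is the uniform positive lower bound $c_0$: one must show that for every $v'\in\mathcal V$ a pair $(\check\psi_{v'},\check\Sigma_{v'})$ exists satisfying (30) and $\check\psi_{v'}\le\sqrt{dp^{\check\Sigma_{v'}}_{v'}}$ with $\check\psi_{v'}$ bounded away from $0$ uniformly in $v'$ — this is where Assumption~5 (uniform clearance of the equilibria from the constraints) and Assumption~6 (admissibility of the constants $\alpha_i,\zeta_i$) are essential — and that a single $\epsilon>0$ can meet both (33) and $\beta_\chi\epsilon<c_0$; everything else is routine once $i$ is taken large.
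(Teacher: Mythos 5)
Your proof defers exactly the step that constitutes the substance of the theorem. You take for granted that for every $v'\in\mathcal V$ there is a pre-existing, valid pair $(\check\psi_{v'},\check\Sigma_{v'})$ satisfying (30) and $\check\psi_{v'}\le\sqrt{d\,p^{\check\Sigma_{v'}}_{v'}}$ with an admissible $p^{\check\Sigma_{v'}}_{v'}$ as in (23), and moreover that $c_0=\inf_{v'\in\mathcal V}\check\psi_{v'}>0$; you then close (36) by invoking these objects. But in the paper these quantities are not given a priori for the new reference $\hat v_i$ — constructing them is precisely what the proof must do (this is the ``implicit terminal set'' construction, contribution (iv)). The paper builds $p^{\Sigma_i}_{\hat v_i}$ online from the value-function chain (40): admissibility of the shifted optimal sequence $\chi^{*\Sigma_{i-1}}(x_i,\hat v_{i-1})$ for $\mathcal P^{\Sigma_i}(x_i,\hat v_i)$ is secured by Q2 ($\Sigma_{i-1}<\underline\Sigma$, with $\Sigma_{i-1}<\Sigma_i<\underline\Sigma$ and $x_i\in\Psi(\underline\Sigma)$ via (33)), constraint admissibility of the resulting sublevel set $\Omega^{\check\Sigma}(\hat v_i)$ in the sense of (23) is secured by Q3, and then $p^{\Sigma_i}_{\hat v_i}=(\beta_\chi\epsilon+\zeta_1\Sigma_{i-1})^2$, $\check\psi_{\hat v_i}=\sqrt{d\,p^{\Sigma_i}_{\hat v_i}}=(\zeta_1/\alpha_1)\Sigma_i$, which forces the specific relation (43) between $\epsilon$, $\Sigma_{i-1}$ and $\Sigma_i$ and makes (39), hence (36a)--(36b), verifiable. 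None of this appears in your argument: you never exhibit an admissible $p^{\Sigma_i}_{\hat v_i}$, never address the constraint-admissibility requirement in (23) (the role of Q3), and your claim that a continuous selection $v'\mapsto\check\psi_{v'}$ with a uniform positive infimum exists is asserted, not proved — you yourself label it ``the main obstacle.'' Proving that uniform bound would require essentially the same construction the paper carries out (clearance $\underline\Sigma$ from Assumption 5, tightening below $\underline\Sigma$, sublevel sets of $F$ inside the tightened constraints, consistency with (30)), so the gap is not a routine detail but the core of the result.

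A secondary mismatch: in your version $\epsilon$ must simultaneously satisfy (33) and $\beta_\chi\epsilon<c_0$, which you assume can be arranged; in the paper $\epsilon$ is pinned down by (43) and shown compatible with Q1 and the first inequality of (36a) by direct substitution ($\sqrt d\,\beta_\chi\epsilon+\zeta_1\Sigma_{i-1}<(\zeta_1/\alpha_1)\Sigma_i$). The parts of your argument that do work — constancy of $\hat v_t$ on the frozen window, $\hat v_i\in\mathcal V$ by convexity, the choice $\kappa_i=\epsilon/\|\bar v-v\|$ giving $\|\hat v_i-\hat v_{i-1}\|=\epsilon$, and using the constant-reference ISS result to drive $\psi^0$ and $\Sigma_t$ small (which is how the paper obtains Q1 and Q2 for large $i$) — mirror the paper, but they only set the stage; the theorem's content is the explicit construction of $\Sigma_i$, $\check\psi_{\hat v_i}$, $p^{\Sigma_i}_{\hat v_i}$, and $\epsilon$ making (36) hold, and that construction is missing from your proposal.
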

 \begin{proof}
     To satisfy Theorem 1 we find values for $\Sigma_i, \check{\psi}_{\hat{v}_i},p^{\Sigma_i}_{\hat{v}_i}$ and $\epsilon$ for which (36) holds when $i > i'$ is sufficiently large.
     
     We first set
     \begin{equation}
         \check{\psi}_{\hat{v}_i} \leftarrow (\zeta_1/\alpha_1)\Sigma_i
     \end{equation} which satisfies (36b) from Assumption 6. We then let the time instants progress until qualifications Q1, Q2, and Q3 that follow are met and show (36a) is also satisfied.
     
     Consider the first inequality in (36a). From asymptotic convergence of the state for a constant reference, it follows that for sufficiently large $i$ the condition 
     \begin{equation}
      \text{Q1:} \ \hat{\psi}^{\Sigma_{i-1}}(x_{i-1},\hat{v}_{i-1}) \leq \min(\beta_{\chi}\epsilon,\ (\sqrt{d}-1)\beta_{\chi} \epsilon)
     \end{equation}
     will be satisfied. Then with the optimality property $\psi^0(x_{i-1},\hat{v}_{i-1}) \leq \hat{\psi}^{\Sigma_{i-1}}(x_{i-1},\hat{v}_{i-1})$ we satisfy the first inequality in (36a) if
     \begin{equation}
      \sqrt{d}\beta_{\chi}\epsilon + \zeta_1 \Sigma_{i-1} \leq \check{\psi}_{\hat{v}_{i}}.
     \end{equation}
     The inequality (39) will be shown at the end of the proof.

     To consider the second inequality in (36a), which allows the construction of $\check{\Gamma}^{\Sigma}(\hat{v}_i)$, we first construct $\Omega^{\check{\Sigma}}(\hat{v}_i)$ for use in defining $\Gamma^{\check{\Sigma}}(\hat{v}_i)$, see (22). Then from (25), $\check{\Gamma}^{\Sigma}(\hat{v}_i) \subseteq \Gamma^{\check{\Sigma}}(\hat{v}_i)$. This requires us to consider the case when $\mathcal{P}^{\Sigma_i}(x_i,\hat{v}_i)$ is solved exactly,
    \begin{gather}
         F(\tilde{x}^{*\Sigma_i}_{N}(x_i,\hat{v}_{i})) \leq V^{\Sigma_{i}}(x_i,\hat{v}_{i}) \nonumber \\
    \leq V^{\Sigma_{i-1}}(x_i,\hat{v}_{i-1}) \nonumber \\
    \leq (\psi^{\Sigma_{i-1}}(x_{i-1},\hat{v}_{i-1}) + \zeta_1 \Sigma_{i-1})^2 
    \end{gather}
    where for the third inequality we have used $\psi^0(x_{i},\hat{v}_{i-1}) \leq \psi^0(x^\circ_{i},\hat{v}_{i-1}) + \big| \psi^0(x_{i},\hat{v}_{i-1}) - \psi^0(x^\circ_{i},\hat{v}_{i-1}) \big| \leq  \psi^0(x_{i-1},\hat{v}_{i-1}) + \zeta_1 \Sigma_{i-1}$ as in (34), then $\psi^0(x_{i-1},\hat{v}_{i-1}) \leq \psi^{\Sigma_{i-1}}(x_{i-1},\hat{v}_{i-1})$. The second inequality necessitates that the sequence $\chi^{*\Sigma_{i-1}}(x_i,\hat{v}_{i-1})$ be admissible for $\mathcal{P}^{\Sigma_i}(x_i,\hat{v}_{i})$. To show this, we necessitate at time instant $i$ that 
    \begin{equation*}
    \text{Q2:} \ \Sigma_{i-1} < \underline{\Sigma}.
    \end{equation*}
     Note that Q2 is always satisfied for $i$ sufficiently large (see Section 6.1.2). Then from  $\hat{\psi}^{\Sigma_{i-1}}(x_{i},\hat{v}_{i-1}) \leq \hat{\psi}^{ \Sigma_{i-1}}(x_{i-1},\hat{v}_{i-1}) \leq \beta_{\chi}\epsilon$, where we used (33) from (38), we have $x_i \in \Psi(\underline{\Sigma})$. Then we choose $\Sigma_i$ such that $ 0 < \Sigma_{i-1} <\Sigma_i < \underline{\Sigma}$ and conclude $\Psi(\underline{\Sigma}) \subset \Psi(\Sigma_i) $. 
    
    Then given an upper bound on $F(\tilde{x}^{*\Sigma_i}_{N}(x_i,\hat{v}_{i}))$ in (40), consistent with (23) we set $p$ at time instant $i$ as 
    \begin{gather}
        p^{\Sigma_i}_{\hat{v}_i} \leftarrow (\beta_{\chi} \epsilon + \zeta_1 \Sigma_{i-1})^2 \\
        \check{\psi}_{\hat{v}_i} \leftarrow\sqrt{dp^{\Sigma_i}_{\hat{v}_i}}
    \end{gather}
     which satisfies the second inequality in (36a). Then from $\check{\psi}_{\hat{v}_i} = (\zeta_1/\alpha_1)\Sigma_i$, we note
        \begin{equation}
            \epsilon = \frac{1}{\beta_{\chi}}(\frac{\zeta_1 \Sigma_i}{\alpha_1 \sqrt{d}} - \zeta_1 \Sigma_{i-1}).       
        \end{equation} 
        
         To show constraints are satisfied in (23) from the construction of $\Omega^{\check{\Sigma}}(\hat v_i)$, we also necessitate at time instant $i$ that 
         \begin{equation*}
             \text{Q3:} \ \sqrt{ \frac{p^{\Sigma_i}_{\hat{v}_i}}{\lambda_{min}(P)}}, \sqrt{ \frac{\lambda_{max}(K)p^{\Sigma_i}_{\hat{v}_i}}{\lambda_{min}(P)}} \leq \underline{\Sigma} - \Sigma_i.
         \end{equation*}
         
         What remains is to show (39). Plugging in (43) into (39), we have $\sqrt{d}\beta_{\chi}\epsilon + \zeta_1 \Sigma_{i-1} =  (\zeta_1/\alpha_1) \Sigma_i - (\sqrt{d}-1)\zeta_1 \Sigma_{i-1} < (\zeta_1/\alpha_1)\Sigma_i$ and the conditions (36) are satisfied.
 \end{proof}

\subsubsection{Computation Governor and Online Procedure}

Considering the criteria in (36) for the selection of $\hat{v}_{t+1}$ needed at time instant $t+1$, if a sufficient amount of time has passed with no modification of reference, $\Sigma_{t+1}$ can be chosen, along with $||\hat{v}_{t+1} - \hat{v}_t|| = \epsilon$, according to Theorem 1, but this represents a conservative approach since $\mathcal{P}^{\Sigma_t}(\theta_t)$ would be very similar to the optimal solve on account of the small constraint tightening parameter. We would like to maximize $\Sigma_t$, subject to (35), to allow for a less restrictive suboptimality termination criterion, as well as maximize $\check{\psi}_{\hat{v}_t}$ subject to (36), to allow for a larger ROA of the modified reference and a large choice of $\kappa_{t+1}$. 

A conceptual way to approach this is to solve a multi-objective optimization problem, such as computing a Pareto-front, but we look to an efficient online approach by first maximizing $\kappa_{t+1}$ and then maximizing $\Sigma_{t+1}$ via the solution of two linear programs. These linear programs have relatively small computation times compared to the ADMM solve due to their small problem size.

The choice of $\kappa_{t+1}$ is subject to the CG strategy to bound the suboptimality of the modified reference, since a selection of a large value of $\kappa_{t+1}$ can result in a violation of (36), and even if not, high computation times of the optimizer. First, consider an upper bound $\Lambda_{t+1}$ on the suboptimality criterion $||w_{0|t+1} - w^*_{t+1}||_T^2$, i.e. $||w_{0|t+1} - w^*_{t+1}||_T^2 \leq \Lambda_{t+1}$. We would like to ensure $\Lambda_{t+1} \leq \overline{\Lambda}$, where $\overline{\Lambda}$ is a threshold for which any suboptimality $\Lambda_{t+1} > \overline{\Lambda}$ requires not adjusting the reference. We can accomplish this from writing the upper bound as (Equation 27 \cite{SVL_ADMM_MPC}), 
\begin{equation}
\Lambda_{t+1} = \sqrt{\mathcal{D}(\{w_{j|t}\})} + \beta_w ||(x_{t+1}-x_t,\kappa_{t+1}(v_{t+1}-\hat{v}_t))||.    
\end{equation}
We choose a design parameter $0 < \sigma < 1$ used in step ii.a) of the Online Procedure below which will scale down the desired suboptimality threshold at certain time instants. This is needed since a high suboptimality threshold could lead to violation of (36), as discussed. We also define a lower threshold $\underline{\Lambda}$ where if $\Lambda_{t+1} \leq \underline{\Lambda}$, reference modification is rejected.

In (43), if we choose $\Sigma_{i-1}$ and $\Sigma_i$ consistent with Theorem 1 offline, and where we denote $\Sigma_{i-1} = \omega\underline{\Sigma}$ for some $\omega$ less than 1, then we lower bound $\epsilon$ as
\begin{equation*}
\underline{\epsilon} = \frac{\omega \underline{\Sigma}}{\beta_{\chi}}(\frac{\zeta_1 }{\alpha_1 \sqrt{d}} - \zeta_1)   
\end{equation*}
for use in the Online Procedure. Then we enforce Theorem 1 Q1 and Q2 online and Q3 will be satisfied from the Online Procedure. We note the selection of $\omega\underline{\Sigma}$ holds for all references.

The following assumption is used to allow for linear program solves in the Online Procedure.

\begin{assumption}
    The constraint sets $\mathcal{U, X}$ are hyper-rectangles.
\end{assumption}

Then we are ready to state the Online Procedure.

\paragraph*{Online Procedure}

\begin{enumerate}
\item[i.] Solve the following Linear Program for an upper bound on the constraint tightening based on current state: $\Sigma_{t+1}, \eta_{t+1}$: 
$\max \Sigma'_{t+1} \ s.t. \ \begin{bmatrix}
    1 & 0\\
    -1 & 0\\
    1 & M\\
    1 & -M
\end{bmatrix} \begin{bmatrix}
    {\Sigma}'_{t+1} \\
    {\eta}'_{t+1}
\end{bmatrix} \leq \begin{bmatrix}
    \overline{\Sigma} \\
    0 \\
     L (x_{t+1},0) + \overline{\overline{b}} \\
     - L (x_{t+1},0) - \underline{\underline{b}}
\end{bmatrix} $. Note there is no dependence on the reference selection.
\item[ii.] \emph{Computation Governor}. 
\begin{enumerate}
    \item If $\kappa_t = 0$, assign $\Lambda_{t+1} = \max(\underline{\Lambda},\sigma \Lambda_t)$. Otherwise, assign $\Lambda_{t+1} \leftarrow \overline{\Lambda}$. 
    \item If $\Lambda_{t+1} = \underline{\Lambda}$, Theorem 1 Q1 and Q2 hold, and $v_{t+1} \neq \hat{v}_t$, set $\kappa_{t+1} \leftarrow \underline{\epsilon}/||v_{t+1}-\hat{v}_t||$, compute $\hat{v}_{t+1}$ and \textbf{go to} step ii.f). 
    \item If $\sqrt{\mathcal{D}(\{w_{j|t}\})} + \beta_w||x_{t+1} - x_t|| \nleq \Lambda_{t+1}$, set $\kappa_{t+1} \leftarrow 0$ and \textbf{break}.    \item $\kappa_{t+1} \leftarrow \min(1,\max(\kappa_{t+1} \ | \ (44)\}))$ and compute $\hat{v}_{t+1}$.
    \item If $0 < \kappa_{t+1} < \underline{\epsilon}/||v_{t+1}-\hat{v}_t||$, set $\kappa_{t+1} \leftarrow 0$ and \textbf{break}. 
    \item Solve for the smallest upper bound on the constraint tightening: $\Sigma''_{t+1} \leftarrow \min(\Sigma'_{t+1},\Sigma_{t+1}^{''x},\Sigma_{t+1}^{''u})$, where $\Sigma_{t+1}^{''x} = \min(\min(\overline{\overline{b}}_x - \underline{\Sigma} \vec{1} - x_{\hat{v}|t+1}), \min(x_{\hat{v}|t+1} -(\underline{\underline{b}}_x + \underline{\Sigma}\vec{1}))), \ \Sigma_{t+1}^{''u} = \min(\min(\overline{\overline{b}}_u - \underline{\Sigma} \vec{1} - u_{\hat{v}|t+1})), \min(u_{\hat{v}|t+1} - (\underline{\underline{b}}_u + \underline{\Sigma}\vec{1})))$.
\end{enumerate}

\item[iii.]
    Solve the following Linear Program for terminal set construction with $\Sigma_{t+1}, \ \overline{\overline{x}}_{t+1}$, where $\overline{\overline{x}}_{t+1}$ is the bound  where $||\tilde{x}_{t+1}|| \leq \overline{\overline{x}}_{t+1} \implies x_{t+1} \in \mathcal{X}^{\Sigma''_{t+1}}, u_{t+1} \in \mathcal{U}^{\Sigma''_{t+1}} $ :
    $\max \Sigma_{t+1} \ s.t.$ 
    $\begin{bmatrix}
    1 & 0 \\
    -1 & 0 \\
    1 & \overline{\lambda}(K) \\
    1 & 1 \\
    \zeta_1 / \alpha_1 & - \sqrt{d \lambda_{min}(P)} \\
    -\alpha_2 / \zeta_2 & \sqrt{d \lambda_{min}(P)} \\
\end{bmatrix} \begin{bmatrix}
    {{\Sigma}_{t+1}} \\
    \overline{\overline{x}}_{t+1}
\end{bmatrix} \leq \begin{bmatrix}
    {\Sigma''_{t+1}} \\
    0 \\
    {b_{t+1}^{''u}} \\
    {b_{t+1}^{''x}} \\
    0 \\ 
    0
\end{bmatrix} $
where $b_{t+1}^{''x} = \min(\min(\overline{\overline{b}}_x - x_{\hat{v}|t+1}), \min(x_{\hat{v}|t+1} -\underline{\underline{b}}_x )), \ b_{t+1}^{''u} = \min(\min(\overline{\overline{b}}_u - u_{\hat{v}|t+1})), \min(u_{\hat{v}|t+1} - \underline{\underline{b}}_u))$. Assign $p^{\Sigma_{t+1}}_{\hat{v}_{t+1}} = \lambda_{min}(P)\overline{\overline{x}}_{t+1}^2$ and $\check{\psi}_{\hat{v}_{t+1}} = \sqrt{dp^{\Sigma_{t+1}}_{\hat{v}_{t+1}}}$. Then (36b), and the second inequality in (36a) are satisfied. 
\item[iv.] 
    If $\hat{\psi}^{\Sigma_t}(x_t,\hat{v}_t) + \beta_{\chi}||\hat{v}_{t+1} - \hat{v}_t|| + \zeta_1 \Sigma_t \nleq \sqrt{dp^{\Sigma_{t+1}}_{\hat{v}_{t+1}}}$, then set $\kappa_{t+1} \leftarrow 0$ and \textbf{break}.
The first inequality in (36a) is satisfied. Note we have used the property $\psi^{0}(x_t,\hat{v}_t) \leq \hat{\psi}^{\Sigma_t}(x_t,\hat{v}_t)$.
\item[v.] \textbf{do} Algorithm 1
\end{enumerate}

    \begin{remark}
        The value of $\sigma$ should not be chosen too large, since $\kappa_{t+1}$ will evaluate to 1 in step ii.d), then rejected in iv). Additionally if it is chosen too small, step ii.c) will set $\kappa_{t+1}$ to 0.
    \end{remark}
    \begin{remark}
        In the absence of Assumption 7, the general calculation of $\Sigma_{t+1}^{'x},\Sigma_{t+1}^{'u}$ is $\Sigma_{t+1}^{''x} = \min(||\overline{\overline{b}}_x - \underline{\Sigma} \vec{1} - x_{\hat{v}|t+1}||, ||x_{\hat{v}|t+1} -(\underline{\underline{b}}_x + \underline{\Sigma}\vec{1})||), \ \Sigma_{t+1}^{''u} = \min(||\overline{\overline{b}}_u - \underline{\Sigma} \vec{1} - u_{\hat{v}|t+1})||, ||u_{\hat{v}|t+1} - (\underline{\underline{b}}_u + \underline{\Sigma}\vec{1})||)$.
    \end{remark}

\subsubsection{Modified Reference Convergence}
From the above procedure, we recover the choice of $||\hat{v}_{t+1} - \hat{v}_t|| = \epsilon$ after a finite amount of steps of $\kappa = 0$ from Theorem 1. This follows from step ii.b). and ii.e). From repeated application of $||\hat{v}_{t+1} - \hat{v}_{t}|| = \epsilon$, we have $||\hat{v}_{i} - v_t|| \leq \epsilon$ at some time instant $i$. It follows that the value $\kappa_t = 1$ becomes admissible for some time $t > i$ from the arguments of Theorem 1. When $\kappa_t = 1$, subsequent time instants use the update (27).

\section{Numerical Example}
The purpose of this section is to illustrate the Online Procedure with a numerical example, and show with the theoretical guarantees the computational burden of the optimizer remains limited. The example dynamic system is modeled after a point mass system with unstable equilibria of the form $(\cdot,0)$, with the discrete dynamics given as 
\begin{equation*}
    x = \begin{bmatrix}
        \tau \\ \dot{\tau}
    \end{bmatrix}, \
    A = \begin{bmatrix}
        1 & 0.3 \\ 0.01 & 1
    \end{bmatrix}, \
    B = \begin{bmatrix}
        0 \\ 0.01
    \end{bmatrix},
\end{equation*} where $\tau$ denotes position, and where the time step interval is 0.3. The trajectory of $x_t^{A1}$ when computing $\kappa_t$ with the Online Procedure is shown in Figure 1 in blue. In Figure 1, $x_0 = (0.194, 0), \overline{\overline{b}}_x = -\underline{\underline{b}}_x = (0.2,0.002), \ \overline{\overline{b}}_u = -\underline{\underline{b}}_u = 1$, and until instant $t = 25, \ v_t = -0.2744$ corresponding to $x_{\hat{v}|t} = (0.194,0)$, thereafter $v_t = -0.2814$ corresponding to $x_{\hat{v}|t} = (0.199,0)$. 

There are two cases $x_t^{A1}$ is compared against. These are Case 2 and Case 3, and they use exact solve settings (they correspond to ADMM run to completion). Case 2 assigns $\hat{v}_t = v_t$ and is plotted in dark grey. Case 3 assigns $\hat{v}_t$ equal to the modified reference from Algorithm 1 ($A1$), and is plotted in lighter grey. 

In the left plot in Figure 1, the modified reference behavior is clearly seen with $x_t^{A1}$. The behavior to select $\underline{\epsilon}$ in the Online Procedure as a reference modification never occurs. The light grey trajectory tracks the same modified references and has a negligible difference with the blue trajectory, whereas the dark grey trajectory rides the desired constraints. An outer boundary of the terminal set $\Omega^{\check{\Sigma}}(\hat{v}_t)$ for $A1$ at time instances of modified reference is also plotted in green.

The leftmost subplot in the upper right shows the reference modification parameter, and finite time convergence to the desired reference. The rightmost plot in the upper right shows the constraint tightening parameter, and this can also be visualized by looking at the distance from maximal radius of the terminal set per modified reference to desired constraint boundary in the left plot. The constraint tightening parameter is close to $3 \times 10^{-4}$ during time instants before when $\kappa_t = 1$, so despite $\mathcal{E}^{\Sigma}$ decaying $\Sigma_t$ which would necessitate high iterations needed in ADMM, the Online Procedure updates the reference often enough to not observe this behavior.

In the lower right plot, the relative difference in ADMM iterations is shown, and we see $A1$ outperforms Case 2 and Case 3 by looking at area under the curves, as well as in clock times and average iterations (see caption). The CG warm-start and $\Sigma_t$ termination criteria both limit the iterations needed in $A1$. 

The parameters $\gamma,\phi,\beta_\chi,\beta_w$ are chosen through sampling based methods, and are primarily a function of the OCP, see Remark 3. Conservatism (small step sizes and small constraint tightening parameter) of Algorithm 1 and the Online Procedure is offset by observing lower values of these parameters. Lower values of $\zeta_1/\alpha_1$, which are functions of $\gamma,\phi,\beta_\chi,\beta_w$, are desirable as it results in larger admissible constraint tightening selections $\Sigma_t$, which in turn lower necessary optimizer iterations. Lower values of $\gamma,\beta_\chi,\beta_w$, result in step ii.c) and step iv.) evaluating to $\kappa_{t+1} \neq 0$ more often, and are desirable. 

To exhibit reference convergence to the desired references in the smallest amount of time, given $\gamma,\phi,\beta_\chi,\beta_w$, we tuned $\overline{\Lambda},\pi_1, \pi_2, \sigma$. Our process was to fix $\overline{\Lambda}$, then keep $\pi_1$ close to $1$ and $\pi_2$ close to $0$ while tuning $\sigma$ according to Remark 4.

\begin{figure}
\hspace*{-0.9cm}\vspace*{-0.2cm}\includegraphics[scale=0.43]{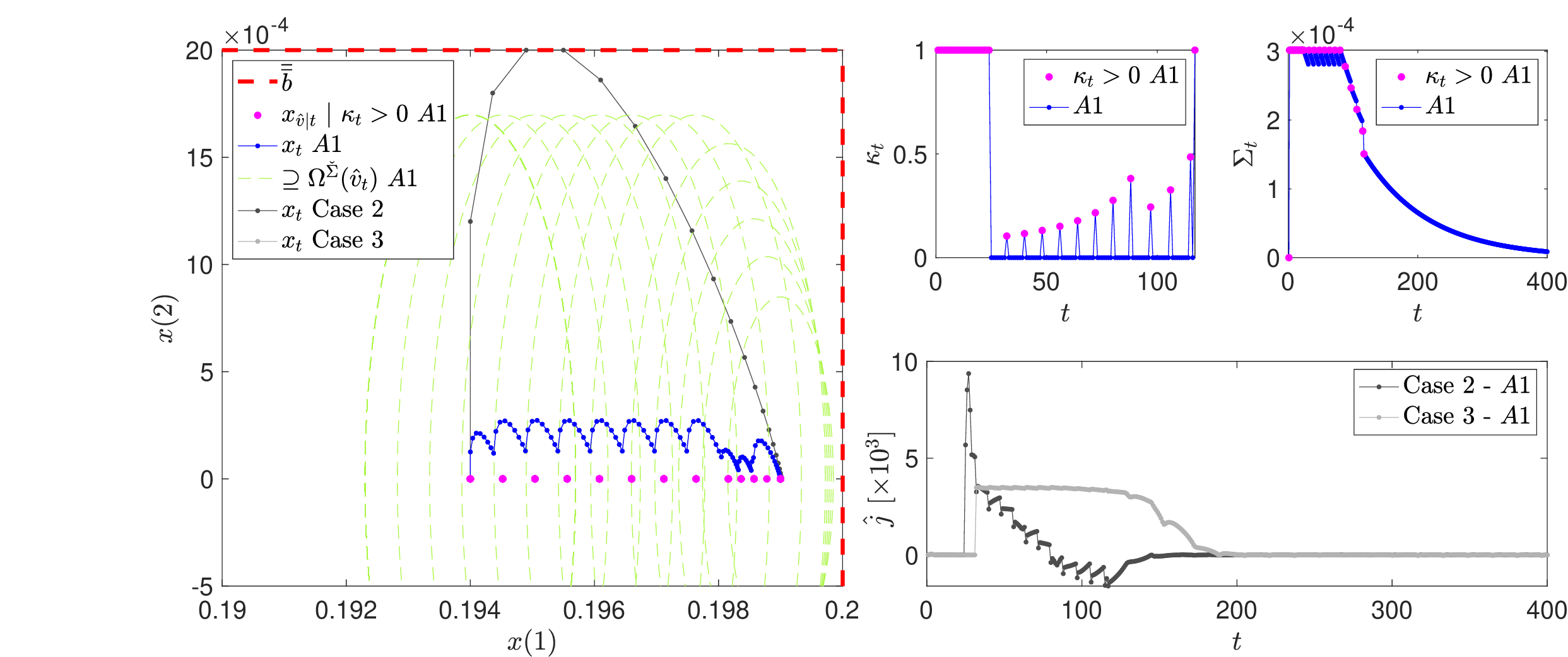}
\label{1}
\vspace*{2mm}
\caption{The left plot shows the state trajectory of the inexact plant-optimizer system (blue), and cases with no suboptimality (grey). Case 3 is nearly identical to the A1 case. In the lower right plot, relative ADMM iterations at termination are shown. Average iterations are: 311, 550, 1380 for A1, Case 2, Case 3 respectively. Wall times (ADMM solve time + Online Procedure time (in the case of A1)) are 6.10, 7.64, 19.14 sec for A1, Case 2, Case 3 respectively. The upper right plots show the governor parameter and constraint tightening parameter for A1. Tunable parameters and Lipschitz constants are as follows. $N = 3, \ R = 0.001, \ Q = I_{(1,1)}, \ \rho = 0.3, \ \gamma = 0.999, \ \phi = 1, \ \pi_1 = 0.99, \ \pi_2 = 0.000001, \ \beta_w = 200, \ \beta{\chi} = 3, \ \overline{\Sigma} = 0.1, \ \underline{\Sigma} = 0.0001, \ \omega = 1/300, \ \overline{\Lambda} = 90, \underline{\Lambda} = \underline{\epsilon}, \ \sigma = 0.4$.}
\end{figure}

\section{Conclusion}
This paper addressed feasibility and convergence for the computationally governed suboptimal LQ-MPC in which reference command modification is combined with constraint tightening to reduce the computational burden.  Our approach facilitates  early optimization algorithm termination while protecting against constraint violations due to the effects of inexact computations.  The approach is grounded in properties of ADMM-based optimization algorithms but it can also be exploited with other optimization algorithms that have appropriate properties. In order to ensure strict guarantees, conservatism is built into the procedure; less conservative approaches will be considered in future work.

\bibliography{admm}

\appendix
\bmsection{Proof of Lemma 1}

We consider $ x_{t+1}^\circ = Ax_{t} + Bu^{*0}(x_t,v)$, and $v_t = v \ \forall t$. 
 
 To begin, we use the stage cost term in (23) and relation (18) to write the inequality, $||\tilde{x}_t||_Q^2 \geq \lambda_{min}(Q) ||\tilde{x}_t||^2 \geq (\lambda_{min}(Q)/\beta_{\chi}^2)(\psi^0({\theta_t}))^2$, (Equation 102 \cite{limited_communication_data_rates}) which, plugging into (23), results in 
\begin{equation}
    \psi^0(x^{\circ}_{t+1},v) \leq \sqrt{1 - \frac{\lambda_{min}(Q)}{\beta_{\chi}^2}} \psi^0(\theta_t)
\end{equation}

We next use (18) to write the inequality $\big| \psi^0(x^\circ_{t+1},v) - \psi^0(x_{t+1},v)\big|^2 = \big| ||\tilde{\chi}^{*0}(x^{\circ}_{t+1},v)||_{H^\chi} - ||\tilde{\chi}^{*0}(x_{t+1},v)||_{H^\chi} \big|^2 \leq \beta_{\chi}^2||x^{\circ}_{t+1} - x_{t+1}||^2 \leq \overline{\lambda}(B) \beta_{\chi}^2||\hat{\tilde{u}}^{\Sigma_t}(\theta_t) - \tilde{u}^{*0}(\theta_t)||^2$. 

Then from 
$||\hat{\tilde{u}}^{\Sigma_t}(\theta_t) - \tilde{u}^{*0}(\theta_t)|| \leq
||\tilde{u}^{*\Sigma_t}(\theta_t) - \hat{\tilde{u}}^{\Sigma_t}(\theta_t)|| + 
||\tilde{u}^{*\Sigma_t}(\theta_t) - \tilde{u}^{*0}(\theta_t)||$, we make use Algorithm 1, Line 5 to observe $||\tilde{u}^{*\Sigma_t}(\theta_t) - \hat{\tilde{u}}^{\Sigma_t}(\theta_t)|| \leq \Sigma_t$, and of the Lipshitz relation (19) to observe
$||\tilde{u}^{*\Sigma_t}(\theta_t) - \tilde{u}^{*0}(\theta_t)|| \leq \phi\Sigma_t$.

Then we have
\begin{equation}
    ||\hat{\tilde{u}}^{\Sigma_t}(\theta_t) - \tilde{u}^{*0}(\theta_t)|| \leq (\phi + 1)\Sigma_t,
\end{equation}
and
\begin{equation}
    \big| \psi^0(x^\circ_{t+1},v) - \psi^0(x_{t+1},v)\big| \leq \sqrt{\overline{\lambda}(B)} \beta_{\chi} (\phi + 1)\Sigma_t.
\end{equation}

Then we use (A1) and (A3) in
$\psi^0(\theta_{t+1}) \leq \big| \psi^0(x^\circ_{t+1},v) - \psi^0(x_{t+1},v)\big| + \psi^0(x^\circ_{t+1},v)$  
which yields (27), where
\begin{gather}
\alpha_1 = 1 - \sqrt{1 - \frac{\lambda_{min}(Q)}{\beta_{\chi}^2}}, \\
\zeta_1 = \sqrt{\overline{\lambda}(B)} \beta_{\chi} (\phi + 1).    
\end{gather}

Moving on to (28), we adopt (Equation 34 \cite{jordan_limon_ROA}, Equation 30 \cite{main_suboptimal}),
\begin{equation*}
    ||\theta_{t+1} - \theta_t|| \leq \frac{\overline{\lambda}(A - I) + \overline{\lambda}(B)}{\sqrt{\lambda_{min}(H^{\chi})}} \psi^0(\theta_t) + \overline{\lambda}(B)|| \ ||\hat{\tilde{u}}^{\Sigma_t}(\theta_t) - \tilde{u}^{*0}(\theta_t)||.
\end{equation*}
We bound (27) as the following
\begin{equation}
\Sigma_{t+1} \leq \pi_1\Sigma_t + \pi_2\frac{\overline{\lambda}(A - I) + \overline{\lambda}(B)}{\sqrt{\lambda_{min}(H^{\chi})}} \psi^0(\theta_t) + \pi_2\overline{\lambda}(B) (\phi + 1) \Sigma_t
\end{equation}
where we made use of (A2). This yields (29), where 
\begin{gather}
\alpha_2 = 1-(\pi_1 + \pi_2\overline{\lambda}(B)(\phi + 1)), \\
\zeta_2 = \pi_2\frac{\overline{\lambda}(A - I) + \overline{\lambda}(B)}{\sqrt{\lambda_{min}(H^{\chi})}}.    
\end{gather}

\end{document}